\newtheorem{theorem}{Theorem}[section]
\newtheorem{corollary}[theorem]{Corollary}
\newtheorem{lemma}[theorem]{Lemma}
\newtheorem{problem}{Problem}
\newtheorem{remark}[theorem]{Remark}
\theoremstyle{definition}
\title{On a question of Haemers regarding vectors in the nullspace of Seidel matrices}
\author{S. Akbari$^{{\rm 1,}}$\footnote{E-mail addresses:
		s$\_$akbari@sharif.edu (S. Akbari), cioaba@udel.edu (S.M. Cioab\u{a}), sg.samiragoudarzi@gmail.com (S. Goudarzi), aidin.niaparast@gmail.com (A. Niaparast), artin.taj@gmail.com (A. Tajdini)}, S. M. Cioab\u{a}$^{{\rm 2}}$, S. Goudarzi$^{{\rm 1}}$, A. Niaparast$^{{\rm 1}}$, and A. Tajdini$^{{\rm 1}}$
	\\[2mm]
	${\rm ^{1}}$\small Department of Mathematical Sciences, Sharif University of Technology, Tehran, Iran
	\\[2mm]
	${\rm ^{2}}$\small Department of Mathematical Sciences, University of Delaware, USA			
}
\date{}
\begin{document}
	\maketitle
	\begin{abstract}
		In 2011, Haemers asked the following question:
		\begin{center}
			\textit{If $S$ is the Seidel matrix of a graph of order $n$ and $S$ is singular, does there exist an eigenvector of $S$ corresponding to $0$ which has only $\pm 1$ elements?}
		\end{center}
		
		In this paper, we construct infinite families of graphs which give a negative answer to this question. One of our constructions implies that for every natural number $N$, there exists a graph whose Seidel matrix $S$ is singular such that for any integer vector in the nullspace of $S$, the absolute value of any entry in this vector is more than $N$. We also derive some characteristics of vectors in the nullspace of Seidel matrices, which lead to some necessary conditions for the singularity of Seidel matrices. Finally, we obtain some properties of the graphs which affirm the above question.
	\end{abstract}
	{\noindent\it\bf 2010 Mathematics Subject Classification:} 05C50, 15A03, 15A18.\\
	{\it\bf Keywords:} Seidel matrix, Nullspace, Switching class.
	
	\section{Introduction}
	
	Throughout this paper all graphs are simple, undirected, and finite. Let $G$ be a graph. The adjacency matrix of graph $G$ with vertex set $V = \{v_1, \ldots. v_n\}$ is denoted by $A=[a_{ij}]_{1\leq i,j\leq n}$, where $a_{ij}=1$ if $v_{i}$ and $v_{j}$ are adjacent, and $a_{ij}=0$ otherwise. Denote by $J_n$ the all-ones $n \times n$ matrix and by $I_n$ the identity matrix of order $n$. The \textit{Seidel matrix} of $G$, denoted by $S$, is defined by $S=J_n-I_n-2A$. There are close connections between Seidel matrices, equiangular lines and two-graphs \cite{balla,mod8_paper,seidel_survey}.
	
	It is known that $rank \,S \ge n - 1$, see \cite[Lemma 3.3]{seidel_rank} or \cite[p.6]{haemers_n}. Assume $rank \, S = n - 1$. Because all elements of the Seidel matrix are integers, there exists an eigenvector corresponding to $0$ with rational entries. Therefore, there exists an integral vector in the nullspace of $S$ such that the greatest common divisor of its entries is 1, and since $null\, S = 1$, this vector is unique up to multiplying by $-1$. We choose that vector whose first non-zero entry is positive, and denote it by $\phi(G)$ or $\phi(S)$ or simply $\phi$. The following problem on the nullspace of singular Seidel matrices was proposed by Haemers in 2011 (see \cite[Problem 3.36]{[2]}).
	\newline
	\begin{problem} \label{problem:hae}	If $S$ is the Seidel matrix of a graph of order $n$ and $rank\,S = n-1$, does there exist an eigenvector of $S$ corresponding to $0$ which has only $\pm 1$ elements? \end{problem}
	This problem is equivalent to finding out whether $\phi(S)$ only has $\pm 1$ entries or not.
	
	In this paper, we construct infinite families of graphs which give a negative answer to Problem \ref{problem:hae}. Moreover, we show that for every natural number $N$, there exists a graph whose Seidel matrix is singular, and the absolute value of every entry of $\phi$ is more than $N$. Furthermore, we investigate some properties of $\phi$, and using these properties, we obtain some necessary conditions for singularity of the Seidel matrix. Finally, we study the graphs having singular Seidel matrices with $\phi \in \{\pm 1\}^n$, and obtain some properties of such graphs.
	
	\section{Preliminaries} \label{section:pre}
	
	In this section, we introduce some notations, concepts, and results which are needed for the next sections.
	
	By $a \overset{m}{\equiv} b$, we mean $a$ and $b$ are congruent modulo $m$.
	Let $G$ be a graph with vertex set $V(G)$ and edge set $E(G)$. By \textit{order} and \textit{size} of $G$, we mean $|V(G)|$ and $|E(G)|$, respectively. The \textit{open neighborhood} of a vertex $v$ is denoted by $N_G(v)$ or $N(v)$, and is the set of all vertices adjacent to $v$. The \textit{closed neighborhood} of $v$, which is $N(v) \cup \{v\}$, is denoted by $N_G[v]$ or $N[v]$. We denote the \textit{degree} of a vertex $v$ in a graph $G$ by $d_G(v)$ or simply $d(v)$. A vertex $v$ is called \textit{even} if $d(v)$ is even, and \textit{odd} if $d(v)$ is odd. A graph is called \textit{even} if all vertices are even. The complement of $G$ is denoted by $\overline{G}$. We denote the \textit{path} and the \textit{cycle} on $n$ vertices by $P_n$ and $C_n$, respectively.
	
	By \textit{switching} the graph $G$ with respect to a vertex $v$, we mean deleting all the edges from $v$ to its neighbors in $G$ and making $v$ adjacent to its non-neighbors in $G$. If $G'$ is the graph obtained by switching with respect to $v$, then ${N_{G'}(v)=V(G)\backslash N_G[v]}$. Two graphs are called \textit{switching-equivalent} if one can be transformed to the other one by a sequence of switchings. This is an equivalence relation on the set of all graphs, and therefore, it partitions the graphs into equivalence classes called \textit{switching classes}. The switching class of a graph $G$ is denoted by $[G]$.
	
	For $G$ and $A \subseteq V(G)$, switching with respect to the vertices in $A$ in any order gives a unique graph $G'$, which could also be obtained by applying switchings with respect to the vertices in $B = V(G) \backslash A$ in any order. In either case, $G'$ is obtained from $G$ by removing all edges between $A$ and $B$, and adding all non-edges between $A$ and $B$ as edges. Therefore, applying some switchings to a graph $G$ is equivalent to partitioning its vertex set into two parts and then complementing the edges between them. When doing so, the parity of the degree of a vertex changes if and only if the number of neighbors this vertex loses does not have the same parity as the number of neighbors it gains, which only happens if the part not including this vertex has an odd number of vertices. In a graph of odd order, there would be one part with odd and one with even cardinality; hence the parity of degrees of all the vertices in the part with even size changes, and the parity of degrees of the remaining vertices remains unchanged.
	\newline
	\begin{remark}\label{remark:evenswitch}
		\normalfont
		For any graph of odd order, switching with respect to the odd vertices transforms it into an even graph. Seidel \cite[Theorem 3.5]{[6]} proved that if $G$ is a graph of odd order, then $G$ contains a unique even graph in its switching class (also see \cite[Theorem 3.17]{hage_thesis}).\\
	\end{remark}
	
	\begin{remark}\label{remark:spectrum}
		\normalfont Let $G$ be a graph, and let $\lambda_1, \ldots, \lambda_n$ be the eigenvalues of $S$ with $\alpha_1, \ldots, \alpha_n$ as their corresponding eigenvectors. If $S'$ is the Seidel matrix of $G$ after applying a switching to vertex $v_k$, then $S' = R_kSR_k$, where $R_k$ is a diagonal matrix obtained by negating the $k^{th}$ main diagonal entry of the identity matrix. Since $R_k = R_k^{-1} $, we have $S'R_k=R_kS$, and therefore, $S'R_k\alpha_i = R_kS\alpha_i=\lambda_iR_k\alpha_i$. Hence, $\lambda_1, \ldots, \lambda_n$ and $\alpha'_1, \ldots, \alpha'_n$ are the eigenvalues and the corresponding eigenvectors of $S'$, where $\alpha'_i = R_k \alpha_i$ is obtained by negating the $k^{th}$ entry of $\alpha_i$. Therefore, applying switchings does not change the spectrum of the Seidel matrix, and only negates the entries of eigenvectors corresponding to the switched vertices. Thus, all switching-equivalent graphs have the same Seidel spectrum.
	\end{remark}
	
	\section{Some Properties of $\phi(S)$ and Necessary Conditions for Singularity of $S$} 
	\label{section:inc}
	In this section, we study the properties of $\phi(S)$ and obtain some necessary conditions for the singularity of the Seidel matrix.
	
	Let $S$ be a singular Seidel matrix. Because $S \phi = 0$, we deduce that for every $1\leq i\leq n$, the following holds:
	\begin{equation}\label{equation:combin}
	\sum_{j: v_j \in N(v_i)} \phi_j - \sum_{\ell: v_{\ell} \notin N[v_i]} \phi_{\ell} = 0.
	\end{equation}
	We use equation \eqref{equation:combin} to obtain some properties of $\phi$.
	\newline
	\begin{lemma} \label{lemma:odd}
		Every entry of $\phi$ is odd.
	\end{lemma}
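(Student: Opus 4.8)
The plan is to show that all entries of $\phi$ share the same parity, and then to rule out the "all even" case using the fact that $\phi$ was chosen to be primitive (the greatest common divisor of its entries is $1$). The whole argument rests on a single algebraic rearrangement of equation \eqref{equation:combin}, so I do not expect any real obstacle; the only point requiring care is making the parity conclusion uniform across all coordinates $i$.

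First I would introduce the total sum $s=\sum_{k=1}^n \phi_k$. Observe that the three groups of vertices $\{v_i\}$, $N(v_i)$, and $V(G)\setminus N[v_i]$ partition $V(G)$, so that
\[
\sum_{\ell:\,v_\ell\notin N[v_i]}\phi_\ell \;=\; s-\phi_i-\sum_{j:\,v_j\in N(v_i)}\phi_j .
\]
Substituting this into \eqref{equation:combin} and collecting terms turns that identity into
\[
\phi_i \;=\; s-2\sum_{j:\,v_j\in N(v_i)}\phi_j ,
\]
valid for every $1\le i\le n$. The key consequence is that $\phi_i\overset{2}{\equiv} s$ for all $i$; since the right-hand side $s$ does not depend on $i$, every entry of $\phi$ is congruent to the same value modulo $2$, i.e.\ all entries have a common parity.

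Finally I would close the argument by eliminating the even case. If every $\phi_i$ were even, then $2$ would divide each entry of $\phi$, contradicting the fact that $\phi$ is the primitive integral null vector (its entries have greatest common divisor $1$, and $\phi\neq 0$ since $S$ is singular). Hence the common parity must be odd, which is exactly the claim that every entry of $\phi$ is odd. The heart of the proof is therefore the rearrangement above; the primitivity hypothesis is used only at the very end to pin down which of the two possible parities actually occurs.
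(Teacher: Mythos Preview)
Your proof is correct and is essentially the same as the paper's. The paper observes from \eqref{equation:combin} that $\sum_{j\neq i}\phi_j$ splits into two equal parts and is therefore even, which is precisely your identity $s-\phi_i=2\sum_{j:\,v_j\in N(v_i)}\phi_j$ read the other way; both then conclude $\phi_i\overset{2}{\equiv}s$ for all $i$ and finish with the primitivity of $\phi$.
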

	\begin{proof}
		By equation ($\ref{equation:combin}$), for $i = 1 ,\ldots, n$, the multiset $\{\, \phi_j \,|\, j \neq i \,\}$  can be partitioned into two multisets with equal sum. Hence, for $i = 1 ,\ldots, n$, $\sum_{j\neq i} \phi_j$ is even. Therefore , all the entries $\phi_1,\ldots,\phi_n$ have the same parity. Since the greatest common divisor of these entries is 1, each of them must be odd.\end{proof}
	
	\begin{theorem} \label{theorem:mod}
		Let $G$ be a graph with vertex set $V = \{v_1,\ldots,v_n\}$, and singular Seidel matrix $S$. For $1 \le i,j \le n$ the following hold: \\
		(i) $ \phi _i - \phi _j  \overset{4}{\equiv}2(d(v_i)-d(v_j))$, \\
		(ii) If $G$ is an even graph, then $\phi _i - \phi _j  \overset{8}{\equiv}2(d(v_i)-d(v_j)) $.
	\end{theorem}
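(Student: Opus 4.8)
The plan is to extract a single linear identity from \eqref{equation:combin} and then reduce it modulo $4$ and modulo $8$, using Lemma \ref{lemma:odd} (every entry of $\phi$ is odd) as the arithmetic input. For each vertex $v_i$ write $\sigma_i=\sum_{v_k\in N(v_i)}\phi_k$ and let $T=\sum_{k=1}^n\phi_k$. Equation \eqref{equation:combin} says exactly that $\sigma_i=\sum_{v_\ell\notin N[v_i]}\phi_\ell$, so splitting $T$ into its three groups (the neighbors of $v_i$, the vertex $v_i$ itself, and the non-neighbors) gives
\begin{equation*}
T=\sigma_i+\phi_i+\sigma_i=2\sigma_i+\phi_i,
\end{equation*}
hence $\phi_i=T-2\sigma_i$. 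Subtracting the analogous identity for $v_j$ yields the master relation
\begin{equation*}
\phi_i-\phi_j=2(\sigma_j-\sigma_i),
\end{equation*}
which is where both parts begin.

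For part (i) I would reduce the master relation modulo $4$, so I only need $\sigma_j-\sigma_i$ modulo $2$. By Lemma \ref{lemma:odd}, $\sigma_i$ is a sum of $d(v_i)$ odd integers, so $\sigma_i\overset{2}{\equiv}d(v_i)$; hence $\sigma_j-\sigma_i\overset{2}{\equiv}d(v_j)-d(v_i)$ and therefore $\phi_i-\phi_j\overset{4}{\equiv}2(d(v_j)-d(v_i))$. Since $2(d(v_j)-d(v_i))$ and $2(d(v_i)-d(v_j))$ differ by $4(d(v_j)-d(v_i))$, a multiple of $4$, this is the same as $2(d(v_i)-d(v_j))$ modulo $4$, giving (i).

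For part (ii), the key observation is that (i) already pins down $\phi$ modulo $4$ in the even case: when $G$ is even all degrees are even, so $2(d(v_i)-d(v_j))\overset{4}{\equiv}0$, and (i) forces all entries $\phi_k$ to share a common residue $r$ modulo $4$, where $r$ is odd by Lemma \ref{lemma:odd}. This upgrade lets me compute $\sigma_i$ modulo $4$ rather than merely modulo $2$: each of the $d(v_i)$ summands is $\equiv r$, so $\sigma_i\overset{4}{\equiv}r\,d(v_i)$. Feeding this into the master relation gives $\phi_i-\phi_j\overset{8}{\equiv}2r(d(v_j)-d(v_i))$, and a short parity check---writing $d(v_j)-d(v_i)=2t$ and using that $r+1$ is even---shows $4t(r+1)\overset{8}{\equiv}0$, whence $2r(d(v_j)-d(v_i))\overset{8}{\equiv}2(d(v_i)-d(v_j))$, which is (ii).

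The routine parts are the modular bookkeeping and the sign conventions; the one genuine idea, and the place I expect to have to think, is the bootstrap in (ii): recognizing that evenness together with part (i) collapses $\phi$ into a single residue class modulo $4$, which is exactly the extra precision needed to push the estimate on $\sigma_i$ from a mod-$2$ statement to a mod-$4$ statement and thereby double the modulus in the conclusion.
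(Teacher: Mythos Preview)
Your proof is correct, and the overall architecture---derive an identity of the shape $\phi_i-\phi_j=2\times(\text{neighborhood sum})$, reduce modulo $4$ using Lemma~\ref{lemma:odd}, then bootstrap via part~(i) to get the mod~$8$ statement in the even case---matches the paper exactly.

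The one genuine difference is how you obtain the master relation. The paper partitions $\{1,\ldots,n\}\setminus\{i,j\}$ into the four sets $M$, $N_{i\backslash j}$, $N_{j\backslash i}$, $N_{ij}$, writes out \eqref{equation:combin} for $v_i$ and $v_j$ separately, and subtracts; this forces a case split on whether $v_i$ and $v_j$ are adjacent and produces only $\pm(\phi_i-\phi_j)=2\bigl(\sum_{N_{i\backslash j}}\phi_b-\sum_{N_{j\backslash i}}\phi_c\bigr)$, with the sign ambiguity carried through the rest of the argument. Your route through the total sum $T$, giving $\phi_i=T-2\sigma_i$ and hence the clean exact identity $\phi_i-\phi_j=2(\sigma_j-\sigma_i)$, sidesteps both the set decomposition and the adjacency case analysis entirely. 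This is a tidier derivation and makes the subsequent modular reductions slightly more transparent, though it buys no extra generality: both arguments use the same arithmetic inputs (Lemma~\ref{lemma:odd} and, for (ii), the common residue $r\bmod 4$) and reach the same conclusions.
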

	\begin{proof}
		(i) Let $M=\{\,a\,|\, v_a \notin N[v_i] \cup N[v_j] \, \}$, $\ N_{i\backslash j}=\{\,b\,|\, v_b \in N(v_i) \backslash N[v_j]\, \}$, $\ N_{j\backslash i}=\{\,c\,|\, v_c \in N(v_j) \backslash N[v_i]\, \}$, and $\ N_{ij} =\{\,d\,|\, v_d \in N(v_i) \cap N(v_j)\, \} $. Using equation (\ref{equation:combin}) for vertices $v_i$ and $v_j$, we get that
		\begin{equation*}
		\pm \phi _j + \sum_{b \in N_{i\backslash j}} \phi_b + \sum_{d \in N_{ij}} \phi_d - \sum_{c \in N_{j\backslash i}} \phi_c - \sum_{a \in M} \phi_a = 0,
		\end{equation*}
		and
		\begin{equation*}
		\pm \phi _i + \sum_{c \in N_{j\backslash i}} \phi_c + \sum_{d \in N_{ij}} \phi_d- \sum_{b \in N_{i\backslash j}} \phi_b - \sum_{a \in M} \phi_a = 0.
		\end{equation*}
		If $v_i$ and $v_j$ are adjacent, then both $\phi_i$ and $\phi_j$ have negative signs above. If $v_i$ and $v_j$ are not adjacent, then $\phi_i$ and $\phi_j$ have positive signs in the previous two equations. Either way, by subtracting these equations we get that
		\begin{equation}\label{equation:mod}
		\pm (\phi_i - \phi _j)=2\left(\sum_{b \in N_{i\backslash j}} \phi_b - \sum_{c \in N_{j\backslash i}} \phi_c\right) .
		\end{equation}
		From Lemma \ref{lemma:odd} we know that all the entries of $\phi$ are odd. Therefore, we obtain that $\phi_i-\phi_j\overset{4}{\equiv} \pm (\phi_i-\phi_j)$ and $\sum_{b \in N_{i\backslash j}} \phi_b - \sum_{c \in N_{j\backslash i}} \phi_c  \overset{2}{\equiv} |N_{i\backslash j}|-|N_{j\backslash i}|$. So
		\begin{equation*}
		\phi_i - \phi _j \overset{4}{\equiv} \pm (\phi_i - \phi _j)=2\left(\sum_{b \in N_{i\backslash j}} \phi_b - \sum_{c \in N_{j\backslash i}} \phi_c\right) \overset{4}{\equiv} 2(|N_{i\backslash j}|-|N_{j\backslash i}|).\end{equation*}
		Since $d(v_i)-d(v_j) = |N_{i\backslash j}| - |N_{j\backslash i}|$, we obtain that
		\begin{equation*}
		\phi_i - \phi _j \overset{4}{\equiv} 2(d(v_i)-d(v_j)).
		\end{equation*}
		
		(ii) Since $G$ is even, by part (i), $\phi_1 \overset{4}{\equiv} \cdots \overset{4}{\equiv} \phi_n \overset{4}{\equiv} r$ for some $r\in \{0,1,2,3\}$. Lemma \ref{lemma:odd} implies that $r \in \{1,3\}$. Consequently,	
		\begin{equation*}
		\sum_{b \in N_{i\backslash j}} \phi_b - \sum_{c \in N_{j\backslash i}} \phi_c  \overset{4}{\equiv} (|N_{i\backslash j}|-|N_{j\backslash i}|)r = (d(v_i)-d(v_j))r \overset{4}{\equiv} d(v_i)-d(v_j),
		\end{equation*}
		where the last one holds because $d(v_i)-d(v_j)$ is even and $r \in \{1,3\}$. So by equation (\ref{equation:mod}), we have that
		\begin{equation*}
		\phi_i - \phi _j\overset{8}{\equiv}\pm (\phi_i - \phi _j)= 2\left(\sum_{b \in N_{i\backslash j}} \phi_b - \sum_{c \in N_{j\backslash i}} \phi_c\right)\overset{8}{\equiv}2(d(v_i)-d(v_j)),
		\end{equation*}
		where the first one is true because
		$\phi_i - \phi_j \overset{4}{\equiv} 0$.
		The proof is complete.
	\end{proof}
	\begin{corollary}\label{corollary:4k+1}
		If $G$ is a graph of order $n$ with singular Seidel matrix, then $ n \overset{4}{\equiv} 1$.
	\end{corollary}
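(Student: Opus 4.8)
The plan is to determine $n$ modulo $4$ from two ingredients: a parity constraint obtained by reducing $S$ modulo $2$, and a refinement modulo $4$ that exploits the oddness of the entries of $\phi$ (Lemma \ref{lemma:odd}). First I would pin down the parity of $n$. Since $S = J_n - I_n - 2A$, we have $S \equiv J_n - I_n \pmod 2$ entrywise, so $\det S \equiv \det(J_n - I_n) \pmod 2$. The eigenvalues of $J_n - I_n$ are $n-1$ (once) and $-1$ (with multiplicity $n-1$), whence $\det(J_n - I_n) = (-1)^{n-1}(n-1)$. If $S$ is singular then $\det S = 0$, which forces $n-1$ to be even, i.e. $n$ is odd. (Equivalently, over $\mathbb{F}_2$ one has $S \equiv J_n + I_n$, a matrix that is invertible exactly when $n$ is even, so singularity forces $n$ odd.)

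Next I would reduce to an even graph and then run a global summation of \eqref{equation:combin}. By Remark \ref{remark:evenswitch}, since $n$ is odd the switching class $[G]$ contains an even graph $G'$, and by Remark \ref{remark:spectrum} switching preserves the Seidel spectrum, so $S(G')$ is again singular and of order $n$; hence I may assume $G$ itself is even, so every $d(v_i)$ is even. Summing \eqref{equation:combin} over all $i$ and interchanging the order of summation, the coefficient of $\phi_j$ coming from the neighbor sums is $d(v_j)$ and the coefficient coming from the non-neighbor sums is $n-1-d(v_j)$, which yields
\begin{equation*}
2\sum_{j} d(v_j)\,\phi_j = (n-1)\sum_{j}\phi_j .
\end{equation*}
Writing $T = \sum_j \phi_j$, the left-hand side is divisible by $4$ because $G$ is even (each $d(v_j)\phi_j$ is even), so $(n-1)T \equiv 0 \pmod 4$. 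By Lemma \ref{lemma:odd} every $\phi_j$ is odd, and $n$ is odd, so $T$ is a sum of an odd number of odd integers and is therefore odd, hence coprime to $4$. Consequently $4 \mid n-1$, that is $n \equiv 1 \pmod 4$.

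The main obstacle is really the first step, establishing that $n$ must be odd, since the clean modulo-$4$ conclusion hinges on $T$ being a unit modulo $4$, which fails when $n$ is even; the reduction of the determinant modulo $2$ is exactly what rules this out. As an alternative to the summation identity and the even-graph reduction, one can instead evaluate the quadratic form $0 = \phi^{\top} S \phi = T^2 - \sum_j \phi_j^2 - 4\sum_{\{i,j\}\in E}\phi_i\phi_j$ and reduce modulo $8$: with $n$ odd, each $\phi_j^2 \equiv 1$ and $T^2 \equiv 1 \pmod 8$, giving $1 \equiv n + 4|E| \pmod 8$ and hence $n \equiv 1 \pmod 4$ directly.
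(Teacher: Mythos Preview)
Your proof is correct, and it differs from the paper's in both steps. For the parity of $n$, the paper reads \eqref{equation:combin} modulo $2$ using Lemma~\ref{lemma:odd} (each side has the parity of the number of its terms, forcing $d(v_i)\equiv n-1-d(v_i)\pmod 2$), whereas you compute $\det S\equiv\det(J_n-I_n)\equiv n-1\pmod 2$; both are quick. For the mod-$4$ step, the paper invokes Theorem~\ref{theorem:mod}(i) to force all entries of $\phi(G')$ into a single residue class $r\in\{1,3\}$ modulo $4$ and then applies \eqref{equation:combin} for a single vertex, while you sum \eqref{equation:combin} over all $i$ and use only the evenness of the degrees together with the oddness of $T$. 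Your route is more self-contained (it does not need Theorem~\ref{theorem:mod}), and your alternative via $\phi^\top S\phi$ is better still: the identity $T^2-\sum_j\phi_j^2-4\sum_{\{i,j\}\in E}\phi_i\phi_j=0$ reduced mod $8$ yields $n+4|E|\equiv 1\pmod 8$ directly, recovering not only Corollary~\ref{corollary:4k+1} but also Corollary~\ref{coro:size}, and it dispenses with the switch to an even graph. The paper's approach, on the other hand, fits naturally into its development, since Theorem~\ref{theorem:mod} has just been established and is reused later.
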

	\begin{proof}
		From Lemma \ref{lemma:odd} and equation (\ref{equation:combin}), we conclude that for $i = 1, \ldots, n$, $d_{G}(v_i)-(n-1-d_{G}(v_i)) \overset{2}{\equiv}  0$, hence $n \overset{2}{\equiv} 1$. By Seidel's result mentioned in Remark \ref{remark:evenswitch}, $G$ is switching equivalent to an even graph $G'$. From Remark \ref{remark:spectrum}, we know that switching does not change the Seidel spectrum of a graph, so $S(G')$ is also singular. Let $\phi' = \phi(G')$. By Theorem \ref{theorem:mod}, part (i), we conclude $\phi'_1 \overset{4}{\equiv} \cdots \overset{4}{\equiv} \phi'_n \overset{4}{\equiv} r$, where $r \in \{1,3\}$. Using equation (\ref{equation:combin}), we deduce that for any $i=1,\ldots,n$,
		\begin{equation*}
		d_{G'}(v_i)r  \overset{4}{\equiv}  \sum_{j \in N_{G'}(v_i)} \phi' _j  \overset{4}{\equiv} \sum_{\ell \notin N_{G'}[v_i]} \phi' _\ell\overset{4}{\equiv} (n-1-d_{G'}(v_i))r.
		\end{equation*}
		Since $\gcd(r,4)=1$ and $d_{G'}(v_i)$ is even, we get that $n  \overset{4}{\equiv} 1 + 2d_{G'}(v_i)  \overset{4}{\equiv} 1$.\end{proof}
	Note that Corollary \ref{corollary:4k+1} can also be derived from \cite[Corollary 3.6]{mod8_paper}.
	\newline
	\begin{lemma} \label{lemma:size}
		Let $G$ be an even graph of order $n$ and size $m$. If the Seidel matrix of $G$ is singular, then
		\begin{equation*}
		m \overset{4}{\equiv} \frac{n - 1}{4}.
		\end{equation*}
	\end{lemma}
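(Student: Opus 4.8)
The plan is to extract $m$ modulo $4$ from the single scalar identity obtained by pairing $\phi$ with itself. Since $S\phi = 0$ we have $\phi^{T} S \phi = 0$, and expanding $S = J_n - I_n - 2A$ gives
\begin{equation*}
\Big(\sum_{i} \phi_i\Big)^2 = \sum_{i}\phi_i^2 + 4\sum_{v_iv_j \in E(G)} \phi_i \phi_j ,
\end{equation*}
where I have used $\phi^{T}J_n\phi = (\sum_i\phi_i)^2$ and $\phi^{T}A\phi = 2\sum_{v_iv_j\in E(G)}\phi_i\phi_j$. Writing $T = \sum_i \phi_i$, the size $m$ enters only through the final sum, which is multiplied by $4$; hence to pin down $m$ modulo $4$ I would read this identity modulo $16$.

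The key structural input is Theorem \ref{theorem:mod}(ii). Since $G$ is even it gives $\phi_i - \phi_j \overset{8}{\equiv} 2(d(v_i)-d(v_j))$ for all $i,j$, so $\phi_i - 2d(v_i)$ is a constant $c$ modulo $8$; as each $\phi_i$ is odd (Lemma \ref{lemma:odd}) and each $d(v_i)$ is even, $c$ is odd, and in particular $\phi_i \overset{4}{\equiv} c$ for every $i$. This is exactly the information needed: although Theorem \ref{theorem:mod}(ii) controls each $\phi_i$ only modulo $8$, the quantities I must evaluate — namely $\phi_i^2 \bmod 16$, $\phi_i\phi_j \bmod 4$, and $T^2 \bmod 16$ — are each determined by residues modulo $8$ or less. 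I would then evaluate the three terms in turn. From $\phi_i \overset{4}{\equiv} c$ with $c$ odd, each product satisfies $\phi_i\phi_j \overset{4}{\equiv} c^2 \overset{4}{\equiv} 1$, so $4\sum_{v_iv_j\in E(G)}\phi_i\phi_j \overset{16}{\equiv} 4m$. Using $\phi_i \overset{8}{\equiv} 2d(v_i)+c$ together with $\sum_i d(v_i)=2m$ and the evenness of $d(v_i)$ (which annihilates the $d(v_i)^2$ contribution modulo $16$), a short expansion gives $\sum_i \phi_i^2 \overset{16}{\equiv} 8m + nc^2$; likewise $T \overset{8}{\equiv} 4m + nc$ yields $T^2 \overset{16}{\equiv} 8m + n^2 c^2$. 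Substituting into the identity, the $8m$ terms cancel and leave $c^2\,n(n-1) \overset{16}{\equiv} 4m$. Finally Corollary \ref{corollary:4k+1} gives $n = 4k+1$ with $k = \tfrac{n-1}{4}$, so $n(n-1)=16k^2+4k$, and since $c^2 \overset{8}{\equiv} 1$ the left-hand side reduces to $4k$ modulo $16$. Thus $4m \overset{16}{\equiv} 4k$, i.e. $m \overset{4}{\equiv} \tfrac{n-1}{4}$.

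The main obstacle is not any individual computation but the need to work modulo $16$ while only controlling $\phi$ modulo $8$. The proof hinges on the observation that every term I must evaluate is insensitive to the lost information, together with the twin facts that the degrees are even (eliminating the quadratic-in-degree terms from $\sum_i\phi_i^2$) and that $n \overset{4}{\equiv} 1$ (so that $c^2 n(n-1)$ contributes exactly $4k$ rather than an uncontrolled multiple of $4$). Keeping the mod-$16$ bookkeeping honest — in particular justifying that $\phi_i^2 \bmod 16$ and $T^2 \bmod 16$ depend only on the mod-$8$ residues, and tracking which $8$-fold terms survive — is where I would take the most care.
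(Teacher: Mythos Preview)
Your argument is correct, but it takes a different route from the paper's. The paper sums equation~\eqref{equation:combin} over all vertices to obtain the exact linear identity $\sum_i d(v_i)\phi_i = \tfrac{n-1}{2}\sum_i \phi_i$, then normalises so that every $\phi'_i \overset{4}{\equiv} 1$ (using only Theorem~\ref{theorem:mod}(i)) and reads this identity modulo~$8$: since each $d(v_i)$ is even, $d(v_i)\phi'_i \overset{8}{\equiv} d(v_i)$, and the conclusion drops out immediately. By contrast you pair $\phi$ with itself via the quadratic form $\phi^{T}S\phi = 0$, invoke the stronger mod-$8$ control of Theorem~\ref{theorem:mod}(ii), and carry the bookkeeping modulo~$16$. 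Both approaches are clean; the paper's is more economical in that it needs only the mod-$4$ input and avoids squaring, while yours is a natural alternative that exploits the quadratic identity and shows the mod-$8$ refinement of Theorem~\ref{theorem:mod} doing real work.
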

	\begin{proof}
		By Corollary \ref{corollary:4k+1}, we have $n \overset{4}{\equiv} 1$. Since $G$ is an even graph, by Theorem \ref{theorem:mod}, Part (i) and Lemma \ref{lemma:odd}, we conclude that $\phi_1 \overset{4}{\equiv} \cdots \overset{4}{\equiv} \phi_n \overset{4}{\equiv} r$,
		for some $r\in \{1,3\}$. We define $\phi'$ as follows:
		\[
		\phi' \coloneqq
		\begin{cases}
		\phi &\quad r=1 \\
		-\phi &\quad r=3. \\
		\end{cases}
		\]
		So we have $\phi'_1 \overset{4}{\equiv} \cdots \overset{4}{\equiv} \phi'_n \overset{4}{\equiv} 1 $. Also, since $\phi' \in \{-\phi, \phi\}$, equation (\ref{equation:combin}) holds for $\phi'$. If we add these equations for $i=1,\ldots,n$, we find that,
		\begin{equation*}
		\sum_{i=1}^{n}(\sum_{j: v_j \in N(v_i)} \phi'_j - \sum_{\ell: v_{\ell} \notin N[v_i]} \phi'_{\ell}) = 0.
		\end{equation*}
		Since for $i=1,\ldots,n$, $\phi'_i$ appears $d(v_i)$ times with positive sign and $n-1-d(v_i)$ times with negative sign, we conclude that,
		\begin{equation*}
		\sum_{i=1}^{n}d(v_i)\phi'_i - \sum_{i=1}^{n} (n-1-d(v_i))\phi'_i = 0,
		\end{equation*}
		thus
		\begin{equation}
		\label{equation:d_iphi_i}
		\sum_{i=1}^{n}d(v_i)\phi'_i = \frac{n-1}{2}\sum_{i=1}^{n}\phi'_i.
		\end{equation}
		Because $\phi'_i \overset{4}{\equiv} 1$ and $d(v_i)$ is even for $i=1,\ldots,n$, we deduce that $d(v_i)\phi'_i\overset{8}{\equiv} d(v_i)$.
		Hence, by equation (\ref{equation:d_iphi_i}),
		\begin{equation*}
		\sum_{i=1}^{n}d(v_i) \overset{8}{\equiv} \frac{n-1}{2}\sum_{i=1}^{n}\phi'_i.
		\end{equation*}
		Therefore,
		\begin{equation*}
		m = \frac{1}{2} \sum_{i=1}^{n}d(v_i) \overset{4}{\equiv} \frac{n-1}{4}\sum_{i=1}^{n}\phi'_i \overset{4}{\equiv} \frac{n-1}{4}n  \overset{4}{\equiv} \frac{n-1}{4},  	
		\end{equation*}
		where the last one is true since $n \overset{4}{\equiv} 1$. The proof is complete.
	\end{proof}
	\begin{theorem} \label{theorem:oddsize}
		Let $G$ be a graph of order $n$ and size $m$. If the Seidel matrix of $G$ is singular, then the following holds:
		\begin{equation}
		m + n_{odd} \overset{4}{\equiv} \frac{n - 1}{4},
		\end{equation}
		where $n_{odd}$ is the number of odd vertices of $G$.
	\end{theorem}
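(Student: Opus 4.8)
The plan is to reduce the statement to the even case already settled in Lemma~\ref{lemma:size} by means of a single switching. By Corollary~\ref{corollary:4k+1} we have $n \overset{4}{\equiv} 1$, so $n$ is odd, and by the handshake lemma the number $n_{odd}$ of odd vertices is even. Let $A$ be the set of odd vertices of $G$ and $B = V(G)\setminus A$. By Remark~\ref{remark:evenswitch}, switching $G$ with respect to $A$ yields an even graph $G'$, and by Remark~\ref{remark:spectrum} its Seidel matrix is again singular. Applying Lemma~\ref{lemma:size} to $G'$ gives $m' \overset{4}{\equiv} \frac{n-1}{4}$, where $m'$ is the size of $G'$. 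Hence it suffices to prove that $m' \overset{4}{\equiv} m + n_{odd}$.

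First I would record how the size changes under this switching. Switching with respect to $A$ leaves every edge with both endpoints in $A$ or both in $B$ intact, while complementing all pairs having exactly one endpoint in each part. Writing $m_{AB}$ for the number of edges of $G$ between $A$ and $B$, the number of such edges becomes $|A|\,|B| - m_{AB}$, so that
\[ m' - m = |A|\,|B| - 2m_{AB}. \]
Using $|B| = n - n_{odd}$ and $n \overset{4}{\equiv} 1$, I get $|A|\,|B| \overset{4}{\equiv} n_{odd} - n_{odd}^{2}$, and therefore $m' - m \overset{4}{\equiv} n_{odd} - n_{odd}^{2} - 2m_{AB}$.

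It then remains to see that the two correction terms vanish modulo $4$. Since $n_{odd}$ is even, $n_{odd}^{2} \overset{4}{\equiv} 0$. For the cut edges, summing degrees over $A$ counts each edge inside $A$ twice and each edge between $A$ and $B$ once, so $\sum_{v\in A} d_G(v) \overset{2}{\equiv} m_{AB}$; as every vertex of $A$ has odd degree and $|A| = n_{odd}$ is even, this sum is even, forcing $m_{AB}$ to be even and hence $2m_{AB} \overset{4}{\equiv} 0$. Combining these gives $m' - m \overset{4}{\equiv} n_{odd}$, whence $m + n_{odd} \overset{4}{\equiv} m' \overset{4}{\equiv} \frac{n-1}{4}$. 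I expect the only delicate point to be the edge-count bookkeeping under switching; the congruences then close exactly because the handshake lemma forces $n_{odd}$ to be even while $n \overset{4}{\equiv} 1$.
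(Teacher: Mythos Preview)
Your proof is correct and follows essentially the same route as the paper: switch with respect to the set of odd vertices to obtain an even graph, apply Lemma~\ref{lemma:size}, and then verify via the edge-count identity $m' - m = |A|\,|B| - 2m_{AB}$ together with the parity facts $n_{odd}$ even and $m_{AB}$ even that $m' \overset{4}{\equiv} m + n_{odd}$. The only cosmetic differences are notation ($A$ versus $O(G)$, $m_{AB}$ versus $m_{oe}$) and the order in which you reduce $n_{odd}(n-n_{odd})$ modulo~$4$.
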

	\begin{proof}
		Let $O(G)$ be the set of odd vertices of $G$. Suppose that $G'$ is the graph obtained from $G$ by switching with respect to the vertices in $O(G)$. Let $m'$ be the size of $G'$. By Remark \ref{remark:evenswitch}, $G'$ is an even graph, and by Remark \ref{remark:spectrum}, the Seidel matrix of $G'$ is singular. Lemma \ref{lemma:size} implies that $n \overset{4}{\equiv} 1$ and $m' \overset{4}{\equiv} \frac{n - 1}{4}$. Let $n_{even}$ be the number of even vertices of $G$ and let $e_G(v)$ be the number of even vertices adjacent to $v$ in $G$. Since $G'$ is obtained from $G$ by switching the edges and non-edges of $G$ between $O(G)$ and $V(G)\backslash O(G)$, we have that
		\begin{equation*}
		m' - m = \sum_{v: v \in O(G)} (n_{even} - e_G(v)) - \sum_{v: v \in O(G)} e_G(v) =  \sum_{v: v \in O(G)} (n_{even} - 2e_G(v)).
		\end{equation*}
		Thus,
		\begin{equation*}
		m' = m + n_{odd}n_{even} - 2\sum_{v: v \in O(G)} e_{G}(v) = m + n_{odd}(n - n_{odd}) - 2m_{oe},
		\end{equation*}
		where $m_{oe}$ is the number of edges of $G$ between $O(G)$ and $V(G)\backslash O(G)$. The Handshaking lemma implies that $n_{odd}$ is even. Also, by adding up the degrees of the vertices in $O(G)$ or in its complement, we deduce that $m_{oe}$ is also even. Therefore,
		\begin{equation*}
		m' = m + n_{odd}n - n_{odd}^2 - 2m_{oe} \overset{4}{\equiv} m + n_{odd}n \overset{4}{\equiv} m + n_{odd},
		\end{equation*}
		and the proof is complete.
	\end{proof}
	Now, we have an immediate corollary.
	\newline
	\begin{corollary}
		\label{coro:size}
		Let $G$ be a graph of order $n$ and size $m$. If the Seidel matrix of $G$ is singular, then  $m \overset{2}{\equiv} \frac{n - 1}{4}$.	
	\end{corollary}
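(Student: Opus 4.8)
The plan is to deduce this directly from Theorem \ref{theorem:oddsize}, since the corollary is precisely a modulo-$2$ shadow of that result. Theorem \ref{theorem:oddsize} supplies the congruence $m + n_{odd} \overset{4}{\equiv} \frac{n-1}{4}$, where $n_{odd}$ is the number of odd vertices of $G$; note that $\frac{n-1}{4}$ is an integer because Corollary \ref{corollary:4k+1} guarantees $n \overset{4}{\equiv} 1$. A congruence modulo $4$ immediately implies the corresponding congruence modulo $2$, so I would first reduce to $m + n_{odd} \overset{2}{\equiv} \frac{n-1}{4}$.

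The only extra ingredient needed is that $n_{odd}$ is even. This is the Handshaking lemma: the degree sum $\sum_{i} d(v_i)$ equals $2m$ and is therefore even, which forces the number of vertices of odd degree to be even. This same fact was already invoked in the proof of Theorem \ref{theorem:oddsize}. Hence $n_{odd} \overset{2}{\equiv} 0$, and substituting this into the reduced congruence cancels the $n_{odd}$ term, leaving $m \overset{2}{\equiv} \frac{n-1}{4}$, which is exactly the claim.

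There is no genuine obstacle here: the statement follows in one line by reducing Theorem \ref{theorem:oddsize} modulo $2$ and using the parity of $n_{odd}$, which is why it is presented as an immediate corollary.
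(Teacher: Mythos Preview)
Your argument is correct and matches the paper's approach: the paper presents this as an immediate corollary of Theorem~\ref{theorem:oddsize} with no separate proof, and the only observation needed is precisely that $n_{odd}$ is even (already noted via the Handshaking lemma in the proof of Theorem~\ref{theorem:oddsize}), so that reducing $m + n_{odd} \overset{4}{\equiv} \frac{n-1}{4}$ modulo $2$ gives the claim.
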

	
	It is worth mentioning that Corollary \ref{coro:size} is weaker than Theorem \ref{theorem:oddsize}. For example, for each $k \geq 0$, non-singularity of the Seidel matrix of $P_{16k+1}$ can be deduced from Theorem \ref{theorem:oddsize} but cannot be derived from Corollary \ref{coro:size}. Note that Corollary \ref{coro:size} can also be concluded from \cite[Theorem 3.5]{mod8_paper}.
	
	Corollary \ref{coro:size} implies that if $G$ is a tree with singular Seidel matrix, then $n \overset{8}{\equiv} 1$. By a computer search, we noted that there is no tree of order 9, whose Seidel matrix is singular. Among 48629 non-isomorphic trees of order 17 (see \cite{McKay}), there are 15 trees with singular Seidel matrix. Also, we checked that Problem \ref{problem:hae} has an affirmative answer for only 2 of these trees. These two trees can be seen in Figure \ref{fig:tree}. In every labeling of these trees that $v_1$ is an even vertex, $\phi_i = (-1)^{d(v_i)}$, for $i=1,\ldots,17$.
	Our program, along with the other 13 trees with singular Seidel matrices, are available in GitHub \cite{GitHub}.
	\begin{figure}[h]
		\begin{subfigure}{.5\textwidth}	
			\centering
			\begin{tikzpicture}[x=0.75pt,y=0.75pt,yscale=-1,xscale=1]
			
			\fill[black] (320,50) circle (4);
			
			\draw (324, 50) -- (344, 50);
			\draw (348, 50) -- (368, 70);
			\draw (368, 70) -- (396, 70);
			\draw (396, 70) -- (424, 70);	
			\draw (348, 50) -- (368, 30);
			\draw (368, 30) -- (396, 30);
			\draw (396, 30) -- (424, 30);
			\fill[black] (348, 50) circle (4);	
			\fill[black] (368, 70) circle (4);	
			\fill[black] (396, 70) circle (4);
			\fill[black] (424, 70) circle (4);	
			\fill[black] (368, 30) circle (4);
			\fill[black] (396, 30) circle (4);
			\fill[black] (424, 30) circle (4);	
			
			\draw (316, 50) -- (296, 50);
			\draw (292, 50) -- (272, 70);
			\draw (272, 70) -- (244, 70);
			\draw (292, 50) -- (272, 30);
			\draw (272, 30) -- (244, 30);
			\draw (264, 50) -- (292, 50);
			
			\fill[black] (292, 50) circle (4);
			\fill[black] (264, 50) circle (4);
			\fill[black] (272, 70) circle (4);
			\fill[black] (244, 70) circle (4);
			\fill[black] (272, 30) circle (4);
			\fill[black] (244, 30) circle (4);
			
			\draw (320, 46) -- (320, 26);
			\draw (320, 22) -- (340, 2);
			\draw (340, 2)  -- (368, 2);
			\fill[black] (320, 22) circle (4);
			\fill[black] (340, 2) circle (4);
			\fill[black] (368, 2) circle (4);
			\end{tikzpicture}
			\caption*{$T_1$}
		\end{subfigure}
		\begin{subfigure}{.5\textwidth}
			\centering
			\begin{tikzpicture}[x=0.75pt,y=0.75pt,yscale=-1,xscale=1]
			
			\fill[black] (320,50) circle (4);
			
			\draw (324, 50) -- (344, 50);
			\draw (348, 50) -- (368, 70);
			\draw (368, 70) -- (396, 70);
			\draw (396, 70) -- (424, 70);	
			\draw (348, 50) -- (368, 30);
			\draw (368, 30) -- (396, 30);
			\draw (396, 30) -- (424, 30);
			\fill[black] (348, 50) circle (4);	
			\fill[black] (368, 70) circle (4);	
			\fill[black] (396, 70) circle (4);
			\fill[black] (424, 70) circle (4);	
			\fill[black] (368, 30) circle (4);
			\fill[black] (396, 30) circle (4);
			\fill[black] (424, 30) circle (4);	
			
			\draw (316, 50) -- (292, 50);
			\draw (292, 50) -- (264, 50);
			\draw (264, 50) -- (244, 70);
			\draw (264, 50) -- (244, 30);
			\draw (264, 50) -- (236, 50);
			\draw (236, 50) -- (208, 50);
			
			\fill[black] (292, 50) circle (4);
			\fill[black] (264, 50) circle (4);
			\fill[black] (236, 50) circle (4);
			\fill[black] (208, 50) circle (4);
			\fill[black] (244, 70) circle (4);
			\fill[black] (244, 30) circle (4);
			
			\draw (320, 46) -- (320, 26);
			\draw (320, 22) -- (340, 2);
			\draw (340, 2)  -- (368, 2);
			\fill[black] (320, 22) circle (4);
			\fill[black] (340, 2) circle (4);
			\fill[black] (368, 2) circle (4);
			\end{tikzpicture}
			\caption*{$T_2$}
		\end{subfigure}
		
		\caption{The only two trees of order 17 with singular Seidel matrices, where $\phi \in \{\pm 1\}^{17}$.}
		\label{fig:tree}
	\end{figure}
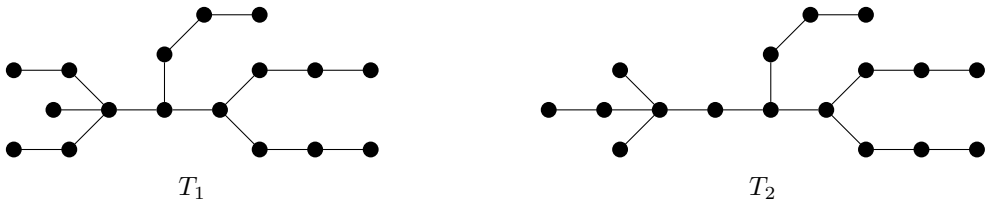
	
	\section{Graph Families with Unbounded Entries of $\phi$ }
	
	Let $G$ be a graph of order $n$ with a singular Seidel matrix. By Corollary \ref{corollary:4k+1}, we know that $n=4k+1$, for some $k\geq 0$. By a computer search, we verified that for every graph of order 5, Problem \ref{problem:hae} has an affirmative answer. However, the following theorem shows that for $k\geq 2$, there is a graph of order $4k+1$, which gives a negative answer to Problem \ref{problem:hae}. The presented family of graphs also shows the unboundedness of entries of $\phi$. Furthermore, in Theorem \ref{theorem:inf}, we show that there is no upper bound for the minimum absolute value of the entries of $\phi$ either. Thus, for every positive integer $N$, there exists a graph whose Seidel matrix is singular, and the absolute value of every entry of $\phi$ is at least $N$.
	\newline
	\begin{theorem}
		\label{theorem:haem}
		For every $k \ge 2$, there exists a graph of order $4k + 1$ with singular Seidel matrix, which gives a negative answer to Problem \ref{problem:hae}.
	\end{theorem}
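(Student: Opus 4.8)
The plan is to prove this constructively rather than through an abstract existence argument, taking advantage of the fact that a singular Seidel matrix always has nullity exactly $1$: since $rank\,S \ge n-1$, as soon as we exhibit a single nonzero integer vector $v$ with $Sv = 0$ we automatically obtain $null\,S = 1$, and then $\phi = \pm\, v/\gcd(v)$. Consequently, to give a negative answer to Problem \ref{problem:hae} it suffices to produce, for each $k \ge 2$, a graph on $4k+1$ vertices together with a \emph{primitive} integer null vector $v$ (entries with $\gcd 1$) having at least one entry of absolute value at least $3$; the value $3$ survives division by $\gcd(v)=1$, so $\phi \notin \{\pm 1\}^{4k+1}$.

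The first step is to recast equation (\ref{equation:combin}) in a form suited to construction. Writing $s = \sum_{j=1}^{n}\phi_j$ and isolating the closed neighborhood, (\ref{equation:combin}) is equivalent to
\begin{equation*}
2\sum_{j:\, v_j \in N(v_i)} \phi_j = s - \phi_i \qquad (1 \le i \le n).
\end{equation*}
Thus building a graph with a prescribed null vector becomes a system of subset-sum conditions: the $\phi$-values over the neighbors of each $v_i$ must sum to the target $(s-\phi_i)/2$. Lemma \ref{lemma:odd} together with $n \overset{4}{\equiv} 1$ (Corollary \ref{corollary:4k+1}) guarantees these targets are integers, and Theorem \ref{theorem:oddsize} gives a congruence that any admissible construction must respect, which is a useful consistency check while searching.

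The second, and central, step is to realize such a system by an explicit family. I would partition the vertex set into a bounded number of \emph{types}, each type being an independent set or a clique, with each pair of types either completely joined or completely non-adjacent, and with $\phi$ constant on each type; one type is designated to carry the value $3$ and another the value $1$ (forcing primitivity), while the remaining types carry $\pm 1$ and absorb the growing number of vertices needed to reach order exactly $4k+1$. For such a structured partition the $n$ equations collapse to a fixed number of equations in the type sizes and type values, which can be solved by hand and then checked to persist for every $k \ge 2$. Verifying $Sv = 0$ thus reduces to a finite identity, after which singularity, and hence $null\,S = 1$, is immediate. A natural alternative is an inductive scheme: given a suitable graph on $4k+1$ vertices, adjoin four new vertices with values summing to $0$ (say $1,1,-1,-1$), connecting each to a balanced subset of the current graph so that (a) the contribution of the four new vertices to each old equation vanishes, preserving the old null relations, and (b) the four new equations hold; this raises the order by $4$ and keeps the large entry intact.

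The hard part will be the second step in either form. The subset-sum equations are rigid — a short computation already rules out the most naive three-type patterns for the base order $9$ — so the real work is finding type sizes and an inter-type adjacency pattern that satisfy \emph{all} the reduced equations simultaneously and \emph{uniformly in $k$}, while ensuring the resulting vector stays primitive (so the entry $3$ is not scaled away) and while respecting the parity/mod-$4$ constraints from Theorem \ref{theorem:oddsize}. Once a working pattern is located, the remaining verification is routine arithmetic in the type parameters.
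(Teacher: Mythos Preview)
Your proposal is a strategy outline, not a proof: the decisive constructive step is never carried out. You reduce the problem correctly to the subset--sum system $2\sum_{j\in N(v_i)}\phi_j = s-\phi_i$, and you sketch two possible realizations (a type--partition family uniform in $k$, or an inductive scheme adjoining four vertices with values $1,1,-1,-1$), but you do not exhibit either. You yourself flag that ``the most naive three-type patterns for the base order $9$'' fail, and that locating a working pattern is ``the hard part''; that hard part is exactly what is missing. As written there is no graph on $9$ vertices in your argument, so neither the uniform family nor the inductive scheme ever gets off the ground.

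There is also a structural issue with your inductive alternative. Choosing the four new $\phi$-values to sum to $0$ is designed to leave the old null relations untouched, but it can only \emph{preserve} a large entry, never \emph{create} one; all the difficulty is pushed into the base case you have not produced. The paper's construction is inductive too, but with the opposite idea: starting from $G_1=C_5$ (where $\phi=j_5$), it adjoins a $4$-vertex gadget whose new $\phi$-entries are all equal to the running sum $c_k=\sum_j\phi_j(G_k)$, yielding $\phi(G_{k+1})=[\,\phi(G_k)\mid c_k,c_k,c_k,c_k\,]^T$ and $c_{k+1}=5c_k$. Thus the large entries are generated by the inductive step itself, and the base case is trivial. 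The missing idea in your plan is precisely this: let the gadget manufacture the growth rather than relying on an unconstructed base graph to supply it.
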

	\begin{proof}
		For $k \geq 1$, we construct a graph $G_k$ of order $4k + 1$ with singular Seidel matrix for which the maximum entry of $\phi$ is $5^{k - 1}$.
		Hence, $G_k$ gives a negative answer to Problem \ref{problem:hae} for $k\ge2$. The construction is recursive. Let $G_1=C_5$, so $S(G_1)=2J_5-I_5-A(C_5)$ and $\phi(G_1)=j_5$, the all-ones vector of dimension $5$. The structure of $G_{k+1}$ in terms of $G_k$ is represented in Figure \ref{fig:graph1}.
		\begin{figure}[h]
			\centering
			\begin{tikzpicture}[x=0.75pt,y=0.75pt,yscale=-1,xscale=1]
			
			\draw (320,0) ellipse (80 and 20);
			\fill[white] (320, -18) circle(12);
			\draw (320,-18) node   {$G_k$};
			
			\draw (270,5) -- (260, 65);
			\draw (310,5) -- (260, 65);
			\draw (370,5) -- (260, 65);
			
			\draw (270,5) -- (380, 65);
			\draw (310,5) -- (380, 65);
			\draw (370,5) -- (380, 65);
			
			\draw (260, 65) -- (380, 65);
			
			\draw (270,-5) node[scale = 0.7] {$v_{1}$};
			\fill[black] (270,5) circle (5.5);
			\draw (310,-5) node[scale = 0.7] {$v_{2}$};
			\fill[black] (310,5) circle (5.5);
			\fill[black] (332,5) circle (1);
			\fill[black] (340,5) circle (1);
			\fill[black] (348,5) circle (1);
			\draw (370,-5) node[scale = 0.7] {$v_{4k+1}$};;
			\fill[black] (370, 5) circle (5.5);
			
			\draw (260,77) node[scale = 0.7] {$v_{4k + 2}$};
			\fill[black] (260, 65) circle (5.5);
			\draw (300,77) node[scale = 0.7] {$v_{4k + 3}$};
			\fill[black] (300,65) circle (5.5);
			\draw (340,77) node[scale = 0.7] {$v_{4k + 4}$};
			\fill[black] (340,65) circle (5.5);
			\draw (380,77) node[scale = 0.7] {$v_{4k + 5}$};	
			\fill[black] (380, 65) circle (5.5);
			
			\end{tikzpicture}
			\caption{Structure of Graph $G_{k + 1}$}
			\label{fig:graph1}
		\end{figure}
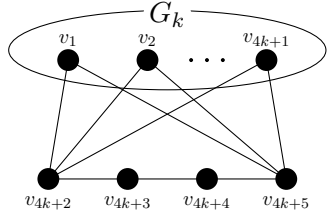	
		Also, the structures of $S(G_{k+1})$ and $\phi(G_{k+1})$ in terms of $S(G_k)$ and $\phi(G_k)$ are as follows:	
		\begin{align*}
		\underbrace{
			\left[
			\begin{array}{c|c}
			\mbox{\normalfont\large\bfseries $S(G_k)$} &
			\begin{matrix}
			-1 & +1 & +1 & -1 \\
			-1 & +1 & +1 & -1 \\
			\vdots & \vdots & \vdots & \vdots \\
			-1 & +1 & +1 & -1
			\end{matrix}
			\\ \hline
			\begin{matrix}
			-1 & -1 & \cdots & -1 \\
			+1 & +1 & \cdots & +1 \\
			+1 & +1 & \cdots & +1 \\
			-1 & -1 & \cdots & -1
			\end{matrix} &
			\begin{matrix}
			0 & -1 & +1 & +1 \\
			-1 & 0 & -1 & +1 \\
			+1 & -1 & 0 & -1 \\
			+1 & +1 & -1 & 0
			\end{matrix}
			\end{array}\right]}_{S(G_{k+1})}
		\underbrace{
			\left[
			\begin{array}{c}
			\vspace{2.85mm}
			\\
			\mbox{\normalfont\large\bfseries $\phi(G_k)$}
			\vspace{2.85mm}
			\\
			\\ \hline
			\begin{matrix}
			c_k \\
			c_k \\
			c_k \\
			c_k \\
			\end{matrix}
			\end{array}\right]}_{\phi(G_{k+1})}
		= 			
		\left[
		\begin{array}{c}
		\begin{matrix}			
		0 \\
		0 \\
		\vspace{0.35mm} \\
		\vdots \\
		\vspace{0.35mm} \\
		0
		\end{matrix}
		\end{array}\right],
		\end{align*}		
		where $c_k$ is the sum of the entries of $\phi(G_k)$. Clearly, the greatest common divisor of entries of
		$[\, \phi(G_k) \, | \, c_k \ c_k \ c_k \ c_k \,]^T$
		is 1 and its first entry is positive; thus, it is consistent with the definition of $\phi$. It can be seen that $c_{k+1} = 5c_{k}$, and since $c_1=5$, we have $c_{k+1} = 5^{k+1}$. Furthermore, it is clear that the maximum entry of $\phi(G_{k+1})$ is $c_{k}$, and the proof is complete.
	\end{proof}
	
	\begin{remark}
		\normalfont In the graphs $G_k$ constructed in the proof of Theorem \ref{theorem:haem}, while the minimum absolute value of the entries of $\phi$ is $1$, the maximum absolute value of its entries tends to infinity as $k$ tends to infinity. \\
	\end{remark}
	
	\begin{theorem}\label{theorem:inf}
		There is no constant upper bound for the minimum absolute value of the entries of $\phi$.
	\end{theorem}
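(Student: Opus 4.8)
The plan is to realise, for each large integer $m$, a graph whose primitive null vector $\phi$ takes only the two values $a = 2m-1$ and $b = 2m+1$. Since $\gcd(a,b)=1$, such a vector is automatically primitive, and every entry has absolute value at least $2m-1$, which tends to infinity with $m$; this is exactly what the statement requires. I would fix a partition $V = A \cup B$ with $|A| = p$ and $|B| = q$, set $\phi_i = a$ for $i \in A$ and $\phi_i = b$ for $i \in B$, and read equation \eqref{equation:combin} as a prescription on neighbourhoods. Writing $x_i$ and $y_i$ for the numbers of neighbours of a vertex $i$ lying in $A$ and in $B$ respectively, and $T = pa + qb$ for the total, equation \eqref{equation:combin} is equivalent to the single linear Diophantine equation $x_i a + y_i b = (T-\phi_i)/2$ at each vertex, because a vertex's neighbourhood and its complement partition the remaining entries into sums differing by zero.

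Solving these equations, using $\gcd(a,b)=1$, shows that every vertex of $A$ can take $x_i = \frac{p-1}{2}$ and $y_i = \frac{q}{2}$, while every vertex of $B$ takes one of exactly two profiles for its numbers of neighbours in $A$ and in $B$, namely $\big(\frac{p+b}{2},\,\frac{q}{2}-m\big)$ or $\big(\frac{p-b}{2},\,\frac{q}{2}+m-1\big)$. I would use $q/2$ vertices of each $B$-profile; a short computation then gives $\sum_{i\in A} y_i = pq/2 = \sum_{j \in B} x_j$, so the prescribed numbers of $A$--$B$ edges agree from both sides and the bipartite part is consistent. Choosing $p \equiv 1 \pmod 4$ and $q \equiv 0 \pmod 4$ forces $n = p+q \equiv 1 \pmod 4$ (as required by Corollary \ref{corollary:4k+1}), makes all prescribed degrees integers, and supplies the parity conditions needed below.

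The three edge sets --- inside $A$, inside $B$, and between $A$ and $B$ --- live on disjoint pairs of vertices, so it is enough to realise each degree sequence independently: a $\frac{p-1}{2}$-regular graph on $A$, which exists because $p \equiv 1 \pmod 4$ makes $\frac{p-1}{2}$ even; a graph on $B$ with the two prescribed degrees, via the Erd\H{o}s--Gallai theorem; and a bipartite graph between $A$ and $B$ with the prescribed one-sided degrees, via the Gale--Ryser theorem. For $p$ and $q$ large compared to $m$ all these degrees lie safely away from $0$ and from their maximum, so realisability is routine, and assembling the three graphs yields a graph $G$ whose Seidel matrix $S$ satisfies $S\phi = 0$ by construction. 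Since $rank\, S \ge n-1$ always, a nonzero null vector forces $rank\, S = n-1$, whence $\phi$ is, up to sign, the primitive null vector, with $\min_i |\phi_i| = 2m-1$. I expect the main obstacle to be purely the bookkeeping that makes the two Diophantine $B$-profiles fit together into genuinely realisable degree sequences --- in particular pinning down explicit lower bounds on $p$ and $q$ in terms of $m$ that validate the Gale--Ryser and Erd\H{o}s--Gallai conditions --- rather than any conceptual difficulty.
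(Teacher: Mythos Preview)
Your approach is correct in outline, but it is genuinely different from the paper's. The paper gives a fully explicit recursive construction: starting from $H_0=C_5$ and, at each step, attaching eight new vertices in a fixed pattern to obtain $H_{k+1}$ from $H_k$; one then checks directly that
\[
\phi(H_{k+1}) \;=\; \bigl[\,3\phi(H_k)\;\big|\;5c_k,\;-c_k,\;-3c_k,\;c_k,\;c_k,\;-3c_k,\;-c_k,\;5c_k\,\bigr]^{T},
\]
where $c_k=5\cdot 7^{k}$ is the entry sum of $\phi(H_k)$, so the minimum absolute entry is exactly $3^{k}$. This is entirely self-contained and yields concrete graphs on $8k+5$ vertices.

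Your argument is instead an existence proof: you prescribe a two-valued target vector with entries $2m-1$ and $2m+1$, translate equation~\eqref{equation:combin} into local degree constraints, and then invoke the Erd\H{o}s--Gallai and Gale--Ryser theorems to realise the three pieces (inside $A$, inside $B$, and the bipartite part) once $p$ and $q$ are large enough relative to $m$. The Diophantine bookkeeping you sketch is right --- the two $B$-profiles you wrote down differ by $(b,-a)$ and are therefore consecutive solutions, and splitting $B$ evenly between them makes the cross-counts match --- and the final step (a nonzero null vector forces $\mathrm{rank}\,S=n-1$, so your $\phi$ is the primitive one) is sound. One phrasing to tighten: these are not the \emph{only} two profiles available for $B$-vertices, merely the two you choose; and you should record explicit thresholds such as $p\ge 2m+1$, $q\ge 2m$, $q\equiv 0\pmod 4$, $p\equiv 1\pmod 4$ and then actually verify the Erd\H{o}s--Gallai and Gale--Ryser inequalities rather than declaring them ``routine''.

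What each approach buys: the paper's construction is short, explicit, and needs no outside results, but produces a single family with a fixed growth rate. Your method is more flexible --- it hits every large odd value as the minimum entry and shows more clearly \emph{why} such graphs must exist --- at the cost of relying on realisability theorems and deferring the inequality checks.
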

	\begin{proof}
		By induction on $k\geq0$, we construct a graph $H_k$ of order $8k + 5$ with singular Seidel matrix such that the minimum absolute value of the entries of $\phi(H_k)$ is $3^k$, and the sum of its entries is $5 \times 7^k$. Thus, for any natural number $N$, there is $k$ such that the minimum absolute value of entries of the vector $\phi$ of $H_k$ is greater than $N$.
		
		Let $H_0=C_5$. As before, $\phi(H_0) = j_5$, where $j_5$ is the all-ones vector of order $5$, so the minimum absolute value of the entries of $\phi(H_0)$ is $1$, and the sum of its entries is $5$. The structure of graph $H_{k+1}$ in terms of $H_k$ is shown in Figure \ref{fig:graph2}.
		\begin{center}
			\begin{figure}[!h]
				\centering
				\begin{tikzpicture}[x=0.75pt,y=0.75pt,yscale=-1,xscale=1]
				
				\draw (320,0) ellipse (80 and 20);
				\fill[white] (320, -18) circle(12);
				\draw (320,-18) node   {$H_k$};
				
				\draw (270,5) -- (300, 65);
				\draw (310,5) -- (300, 65);
				\draw (370,5) -- (300, 65);
				
				\draw (270,5) -- (340, 65);
				\draw (310,5) -- (340, 65);
				\draw (370,5) -- (340, 65);
				
				\draw (260,110) -- (380,110);
				\draw (300,140) -- (340,140);
				\draw (300,110) -- (300,140);	
				\draw (340,110) -- (340,140);	
				
				\draw (270,-5) node[scale = 0.7] {$v_{1}$};
				\fill[black] (270,5) circle (5.5);
				\draw (310,-5) node[scale = 0.7] {$v_{2}$};
				\fill[black] (310,5) circle (5.5);
				\fill[black] (332,5) circle (1);
				\fill[black] (340,5) circle (1);
				\fill[black] (348,5) circle (1);
				\draw (370,-5) node[scale = 0.7] {$v_{8k+5}$};;
				\fill[black] (370, 5) circle (5.5);
				
				\draw (300,77) node[scale = 0.7] {$v_{8k + 9}$};
				\fill[black] (300,65) circle (5.5);
				\draw (340,77) node[scale = 0.7] {$v_{8k + 10}$};
				\fill[black] (340,65) circle (5.5);
				
				\draw (300,100) node[scale = 0.7] {$v_{8k + 6}$};
				\fill[black] (300,110) circle (5.5);
				\draw (340,100) node[scale = 0.7] {$v_{8k + 13}$};
				\fill[black] (340,110) circle (5.5);
				\draw (260,100) node[scale = 0.7] {$v_{8k + 11}$};
				\fill[black] (260, 110) circle (5.5);
				\draw (380,100) node[scale = 0.7] {$v_{8k + 8}$};	
				\fill[black] (380, 110) circle (5.5);
				\draw (300,152) node[scale = 0.7] {$v_{8k + 12}$};
				\fill[black] (300, 140) circle (5.5);
				\draw (340,152) node[scale = 0.7] {$v_{8k + 7}$};
				\fill[black] (340,140) circle (5.5);
				\end{tikzpicture}
				\caption{Structure of Graph $H_{k + 1}$}
				\label{fig:graph2}
			\end{figure}
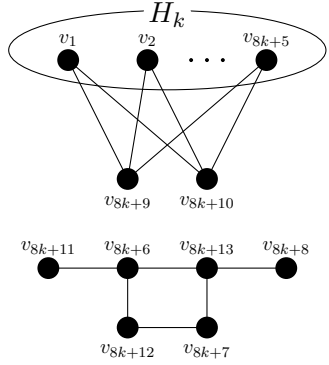
		\end{center}
		Also, the structures of $S(H_{k+1})$ and $\phi(H_{k+1})$ in terms of $S(H_k)$ and $\phi(H_k)$ are as follows:
		\begin{align*}
		\underbrace{
			\left[
			\begin{array}{c|c}
			\mbox{\normalfont\large\bfseries $S(H_k)$} &
			\begin{matrix}
			+1 & +1 & +1 & -1 & -1 & +1 & +1 & +1\\
			+1 & +1 & +1 & -1 & -1 & +1 & +1 & +1\\
			\vdots & \vdots & \vdots & \vdots &\vdots & \vdots & \vdots & \vdots\\
			+1 & +1 & +1 & -1 & -1 & +1 & +1 & +1
			\end{matrix}
			\\ \hline
			\begin{matrix}
			+1 & +1 & \cdots & +1 \\
			+1 & +1 & \cdots & +1 \\
			+1 & +1 & \cdots & +1 \\
			-1 & -1 & \cdots & -1 \\
			-1 & -1 & \cdots & -1 \\
			+1 & +1 & \cdots & +1 \\
			+1 & +1 & \cdots & +1 \\
			+1 & +1 & \cdots & +1
			\end{matrix} &
			\begin{matrix}
			0 & +1 & +1 & +1 & +1 & -1 & -1 & -1\\
			+1 & 0 & +1 & +1 & +1 & +1 & -1 & -1\\
			+1 & +1 & 0 & +1 & +1 & +1 & +1 & -1\\
			+1 & +1 & +1 & 0 & +1 & +1 & +1 & +1\\
			+1 & +1 & +1 & +1 & 0 & +1 & +1 & +1\\
			-1 & +1 & +1 & +1 & +1 & 0 & +1 & +1\\
			-1 & -1 & +1 & +1 & +1 & +1 & 0 & +1\\
			-1 & -1 & -1 & +1 & +1 & +1 & +1 & 0
			\end{matrix}
			\end{array}\right]}_{S(H_{k+1})}
		\underbrace{
			\left[
			\begin{array}{c}
			\vspace{2.85mm}
			\\
			\mbox{\normalfont\large\bfseries $3\phi(H_k)$}
			\vspace{2.85mm}
			\\
			\\ \hline
			\begin{matrix}
			5c_k \\
			-c_k \\
			-3c_k \\
			c_k \\
			c_k \\
			-3c_k \\
			-c_k \\
			5c_k \\
			\end{matrix}
			\end{array}\right]}_{\phi(H_{k+1})}
		=
		\left[
		\begin{array}{c}
		\begin{matrix}			
		0 \\
		0 \\
		\vspace{4.35mm} \\
		\vdots \\
		\vspace{4.35mm} \\
		0
		\end{matrix}
		\end{array}\right],
		\end{align*}
		where $c_k$ is the sum of the entries of $\phi(H_k)$. Note that $c_k = 5 \times 7^k$, so the greatest common divisor of entries of
		$[\, 3\phi(H_k) \, | \, 5c_k \ {-c_k} \ {-3c_k} \ \ c_k \ \ c_k  \ {-3c_k} \ {-c_k} \ \, {5c_k} \,]^T$
		is $gcd(3,c_k)=1$ and its first entry is positive. Hence, it is consistent with the definition of $\phi$.
		The structure of $\phi(H_{k+1})$ indicates that $c_{k+1} = 7c_k$, thus $c_{k+1} = 5 \times 7^{k+1}$. Furthermore, the minimum absolute value of entries of $\phi(H_{k + 1})$ is equal to the minimum absolute value of $3\phi(H_k)$, which is $3 ^ {k+1}$, and it tends to infinity as $k$ tends to infinity.\end{proof}
	
	\section{Properties of Graphs with $\phi(S)\in \{\pm 1\}^n$} \label{section:5}
	
	In this section, we study the graphs $G$ which give an affirmative answer to the Problem \ref{problem:hae}. This means that $\phi(S)\in \{\pm 1\}^n$. By Corollary \ref{corollary:4k+1}, we know that if $G$ is a graph of order $n$ with singular Seidel matrix, then $n=4k+1$, for some non-negative integer $k$. Hence, we only consider graphs of order $n = 4k + 1$. Our first result gives a characterization of the switching class of such graphs.
	\newline
	\begin{theorem}\label{theorem:switch-reg}
		Let $G$ be a graph of order $n = 4k + 1$. The Seidel matrix $S$ of $G$ is singular and $\phi(S) \in \{\pm 1\}^n$ if and only if $G$ belongs to the switching class of a $2k$-regular graph.\end{theorem}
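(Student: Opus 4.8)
The plan is to reduce both directions to a single observation: the all-ones vector lies in the nullspace of a Seidel matrix exactly when the graph is $2k$-regular, and one passes between an arbitrary $\phi \in \{\pm 1\}^n$ and the all-ones vector by switching. Writing $j_n$ for the all-ones vector of length $n$ and using $S(H) = J_n - I_n - 2A(H)$, I would first record the identity
\begin{equation*}
S(H)\, j_n = (n-1)\,j_n - 2\,d_H,
\end{equation*}
where $d_H = (d_H(v_1),\ldots,d_H(v_n))^T$ is the degree vector of $H$. Consequently $S(H)\,j_n = 0$ if and only if $d_H(v_i) = \tfrac{n-1}{2} = 2k$ for all $i$, that is, if and only if $H$ is $2k$-regular. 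This identity is the engine of both implications; the remainder is bookkeeping with switching via Remark \ref{remark:spectrum}.

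For the forward direction, I would start from a singular $S = S(G)$ with $\phi \in \{\pm 1\}^n$, set $A = \{\, i : \phi_i = -1 \,\}$, and let $G'$ be the graph obtained by switching $G$ with respect to $A$. Composing the single-vertex statement of Remark \ref{remark:spectrum} over the vertices of $A$ (the diagonal $\pm 1$ matrices $R_k$ commute) shows $S(G') = R_A S R_A$, where $R_A$ is diagonal with $-1$ exactly on $A$, and that the nullvector transforms to $R_A\phi$. Since $R_A\phi = j_n$, the all-ones vector lies in the nullspace of $S(G')$, so by the displayed identity $G'$ is $2k$-regular, placing $G$ in the switching class of a $2k$-regular graph.

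For the reverse direction, I would assume $G$ is switching-equivalent to a $2k$-regular graph $G'$, obtained from $G$ by switching with respect to some set $A$. The identity gives $S(G')\,j_n = 0$, so $S(G')$ is singular; together with the general bound $\mathrm{rank}\,S \ge n-1$ recalled in the introduction, the nullspace is one-dimensional and spanned by $j_n$, whence $\phi(G') = \pm j_n \in \{\pm 1\}^n$. Running Remark \ref{remark:spectrum} backwards, $S(G) = R_A S(G') R_A$ is again singular of nullity one with nullspace spanned by the $\pm 1$ vector $R_A\,j_n$, and after the sign normalization in the definition of $\phi$ this yields $\phi(S) \in \{\pm 1\}^n$.

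I do not expect a substantive obstacle; the only points needing care are minor normalization issues. First, a $\pm 1$ vector is automatically primitive (the gcd of its entries is $1$), so fixing the sign of the first nonzero entry keeps it in $\{\pm 1\}^n$; this is what lets me identify $\phi$ with $\pm R_A j_n$. Second, I must justify that switching with respect to a set $A$ really implements simultaneous conjugation by $R_A$ together with the corresponding negation of the nullvector's entries, which follows by iterating the single-vertex case of Remark \ref{remark:spectrum}.
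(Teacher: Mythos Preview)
Your proposal is correct and follows essentially the same approach as the paper: both directions hinge on the observation that $S(H)j_n = (n-1)j_n - 2d_H$ vanishes precisely when $H$ is $2k$-regular, together with the fact (Remark~\ref{remark:spectrum}) that switching with respect to a set $A$ conjugates $S$ by the diagonal sign matrix $R_A$ and negates the corresponding entries of the nullvector. Your treatment is slightly more explicit about the rank bound and the gcd/sign normalization of $\phi$, but the argument is the same.
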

	\begin{proof}
		Suppose that $G$ is a graph with Seidel matrix $S$ such that $\phi(S) \in \{\pm 1\}^n$. We define $L := \{ \,v_i \, |  \, (\phi(S))_i = -1 \, \}$. By switching $G$ with respect to $L$, we obtain a graph $G'$, with $S'$ and $A'$ as its Seidel matrix and adjacency matrix, respectively. By Remark \ref{remark:spectrum}, $S'$ is singular and $\phi(S')=j_n$, where $j_n$ is the all-ones vector of dimension $n$. Therefore,
		\begin{equation*}
		0 = S'j_n = (J_n-I_n-2A')j_n = (4k + 1)j_n  -j_n - 2A'j_n.
		\end{equation*}
		Thus, $A'j_n=2kj_n$, and so $G'$ is a $2k$-regular graph.
		
		To prove the other direction, suppose that $H$ is a $2k$-regular graph in the switching class of $G$. We have that
		\begin{equation*}
		S(H)j_n = (J_n-I_n-2A(H))j_n = (4k + 1)j_n - j_n - 2 (2k) j_n=0.
		\end{equation*}
		So $j_n$ is an eigenvector of $S(H)$ corresponding to $0$, and therefore $\phi(H) = j_n$. The graph $G$ is obtained from $H$ by some switching. Since switching with respect to any subset of vertices only negates the corresponding entries of its Seidel eigenvectors, it follows that $\phi(S) \in \{\pm 1 \}^n$, which finishes the proof.
	\end{proof}
	
	Theorem \ref{theorem:switch-reg} provides a way for constructing graphs with singular Seidel matrices, which are rare in general. It is shown in \cite{seidel_probab} that the proportion of graphs of order $n$
	with singular Seidel matrices tends to zero as $n$ tends to infinity.	
	
	The next results present some structural properties of graphs with $\phi(S)\in \{\pm 1\}^n$.
	\newline
	\begin{lemma}\label{lemma:leaf}
		Let $G$ be a graph of order $n = 4k + 1$ with singular Seidel matrix $S$, and $\phi(S) \in \{\pm 1\}^n$. If $G$ has a leaf, then $G$ has exactly $2k$ or $2k + 2$ odd vertices.
	\end{lemma}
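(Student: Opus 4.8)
The plan is to exploit the fact that, under the hypothesis $\phi(S)\in\{\pm1\}^n$, the sign of each entry of $\phi$ is completely determined by the parity of the corresponding vertex degree. First I would apply Theorem \ref{theorem:mod}(i): for all $i,j$ we have $\phi_i-\phi_j\overset{4}{\equiv} 2(d(v_i)-d(v_j))$. Since each $\phi_i\in\{\pm1\}$, the difference $\phi_i-\phi_j$ lies in $\{-2,0,2\}$, so $\phi_i-\phi_j\overset{4}{\equiv} 0$ forces $\phi_i=\phi_j$ while $\phi_i-\phi_j\overset{4}{\equiv} 2$ forces $\phi_i\neq\phi_j$. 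Reading off the right-hand side, this says exactly that $\phi_i=\phi_j$ when $d(v_i)\overset{2}{\equiv} d(v_j)$, and $\phi_i=-\phi_j$ otherwise. Hence there are constants $a,b\in\{\pm1\}$ with $\phi_i=a$ for every even vertex $v_i$ and $\phi_i=b$ for every odd vertex $v_i$, and $b=-a$. Note that both parities actually occur: the assumed leaf is an odd vertex, and since $n_{odd}$ is even by the Handshaking lemma while $n=4k+1$ is odd, the number $n_{even}=n-n_{odd}$ is odd and hence positive.

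Next I would extract a single scalar identity from the leaf. Let $u$ be a leaf and $w$ its unique neighbor. Applying equation \eqref{equation:combin} at $u$, where $N(u)=\{w\}$ and $N[u]=\{u,w\}$, gives $\phi_w=\sum_{\ell\neq u,w}\phi_\ell$. Writing $\Sigma=\sum_{i=1}^n\phi_i$ for the total sum of the entries, this rearranges to $\Sigma=\phi_u+2\phi_w$.

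Finally I would combine the two descriptions of $\Sigma$. On one hand $\Sigma=a\,n_{even}+b\,n_{odd}$; on the other hand $\Sigma=\phi_u+2\phi_w$ with $\phi_u=b$ (as $u$ is odd). Substituting $a=-b$ and $n_{even}=n-n_{odd}=4k+1-n_{odd}$ and simplifying reduces the equation to $n_{odd}-(2k+1)=b\phi_w$. Since $b,\phi_w\in\{\pm1\}$, the right-hand side lies in $\{-1,+1\}$, so $n_{odd}\in\{2k,2k+2\}$, as claimed; the two cases correspond precisely to whether the neighbor $w$ has even or odd degree. The only real content is the first paragraph, namely the observation that the sign pattern of $\phi$ is a parity-of-degree invariant; everything afterwards is a short computation, so I do not anticipate a genuine obstacle beyond the bookkeeping of keeping track of $a=-b$ and the normalization $n\overset{4}{\equiv}1$.
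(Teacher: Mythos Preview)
Your proof is correct and takes a genuinely different route from the paper's. The paper invokes Theorem~\ref{theorem:switch-reg} to pass to a $2k$-regular graph $H$ in the switching class of $G$; it then observes that in order for the leaf $v$ to end up with a single neighbor $u$, the switching set must be $N_H(v)\setminus\{u\}$ (if $uv\in E(H)$) or $N_H(v)\cup\{u\}$ (if $uv\notin E(H)$), and reads off $n_{odd}$ from the parity of the partition sizes. You instead stay entirely inside $G$: Theorem~\ref{theorem:mod}(i) together with $\phi\in\{\pm1\}^n$ forces the sign of $\phi_i$ to be a function of the parity of $d(v_i)$, and then a single application of equation~\eqref{equation:combin} at the leaf collapses to $n_{odd}-(2k+1)=b\phi_w\in\{\pm1\}$. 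Your argument is shorter and avoids the switching-class detour; the paper's argument has the compensating virtue that the dichotomy $n_{odd}\in\{2k,2k+2\}$ is made visibly equivalent to whether the leaf and its neighbor were already adjacent in the regular model~$H$, which your final sentence recovers only implicitly (via whether $w$ has odd or even degree in $G$).
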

	\begin{proof} By Theorem \ref{theorem:switch-reg}, there exists a $2k$-regular graph $H$ such that $G$ is obtained from $H$ by some switching. Let $v$ be a leaf of $G$, and $u$ be its neighbor in $G$. There are two cases to consider.
		
		{\em Case 1:} The vertices $u$ and $v$ are adjacent in $H$. In this case, $H$ is transformed into $G$ by switching with respect to $A = N_{H}(v) \backslash \{u\}$ or $B = V \backslash A $. Because $|A| = 2k - 1 \overset{2}{\equiv} 1$ and $|B| = 2k + 2 \overset{2}{\equiv} 0$, complementing the edges between $A$ and $B$ changes the parity of the degree of vertices in $B$. Therefore, in $G$, the vertices in $B$ are odd and the vertices in $A$ are even. So $G$ has $2k + 2$ odd vertices.
		
		{\em Case 2:} The vertices $u$ and $v$ are not adjacent in $H$. In this case, $H$ is transformed into $G$ by switching with respect to $A = N_{H}(v) \cup \{u\}$ or $B = V \backslash A $. In this case, $|A| = 2k + 1 \overset{2}{\equiv} 1$ and $|B| = 2k \overset{2}{\equiv} 0$. Thus, $G$ has $|B| = 2k$ odd vertices. \end{proof}
	\begin{corollary}
		If G is a tree of order $n = 16k + r \ (0 \leq r \leq 15)$ with a singular Seidel matrix, then G has exactly $8k + s$ odd vertices, where $(r, s)$ is either $(1, 0)$ or $(9, 6)$.
	\end{corollary}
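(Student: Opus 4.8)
The plan is to combine Lemma~\ref{lemma:leaf}, which narrows the number of odd vertices to two candidate values, with the modulo-$4$ constraint of Theorem~\ref{theorem:oddsize}, which then singles out exactly one of them.

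First I would pin down the admissible residues $r$. Since $G$ is a tree, $m = n-1$, so Corollary~\ref{coro:size} gives $n-1 \overset{2}{\equiv} \frac{n-1}{4}$; together with Corollary~\ref{corollary:4k+1}, which guarantees $4 \mid n-1$, this forces $n \overset{8}{\equiv} 1$, exactly as remarked after Corollary~\ref{coro:size}. Writing $n = 16k + r$ with $0 \le r \le 15$, the condition $n \overset{8}{\equiv} 1$ becomes $r \overset{8}{\equiv} 1$, so $r \in \{1, 9\}$, which accounts for the two admissible pairs $(r,s)$.

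Next I would apply Lemma~\ref{lemma:leaf}. A tree with more than one vertex has a leaf (the case $n=1$ being trivial, since its single vertex is even), and we are in the regime $\phi(S) \in \{\pm 1\}^n$, so the lemma applies. Setting $K := \frac{n-1}{4}$ so that $n = 4K+1$, it yields $n_{odd} \in \{2K,\, 2K+2\}$. For $r = 1$ we have $K = 4k$, hence $n_{odd} \in \{8k,\, 8k+2\}$; for $r = 9$ we have $K = 4k+2$, hence $n_{odd} \in \{8k+4,\, 8k+6\}$.

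Finally I would break the tie using Theorem~\ref{theorem:oddsize}. Substituting $m = n-1$ yields $n_{odd} \overset{4}{\equiv} \frac{n-1}{4} - (n-1)$, which reduces to $n_{odd} \overset{4}{\equiv} 0$ when $r = 1$ and to $n_{odd} \overset{4}{\equiv} 2$ when $r = 9$. In the first case this discards $8k+2$ and leaves $n_{odd} = 8k$, so $s = 0$; in the second it discards $8k+4$ and leaves $n_{odd} = 8k+6$, so $s = 6$. The step I expect to be the crux is recognizing that Lemma~\ref{lemma:leaf} alone cannot separate the two candidates---they share the same parity---and that the finer information modulo $4$ supplied by Theorem~\ref{theorem:oddsize} is precisely what is needed to complete the selection.
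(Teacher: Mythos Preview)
Your proposal is correct and follows essentially the same route as the paper: narrow $n_{odd}$ to the two candidates $\tfrac{n-1}{2}$ and $\tfrac{n+3}{2}$ via Lemma~\ref{lemma:leaf}, then use the congruence $n_{odd}\overset{4}{\equiv}\tfrac{n-1}{4}$ from Theorem~\ref{theorem:oddsize} (with $m=n-1$ and $n-1\overset{4}{\equiv}0$) to select the right one in each residue class. The only cosmetic difference is that you first establish $r\in\{1,9\}$ via Corollary~\ref{coro:size} before applying the two main ingredients, whereas the paper obtains $n\overset{16}{\equiv}1$ or $n\overset{16}{\equiv}9$ as a byproduct of matching each candidate against the mod~$4$ constraint; since Corollary~\ref{coro:size} is itself a consequence of Theorem~\ref{theorem:oddsize}, this extra step is harmless but not needed.
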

	\begin{proof}
		Let $n_{odd}$ be the number of  odd vertices of $G$. By Theorem \ref{theorem:oddsize} and Corollary \ref{corollary:4k+1}, we conclude that $n_{odd} \overset{4}{\equiv} \frac{n - 1}{4}$. By Lemma \ref{lemma:leaf}, $n_{odd}$ is either $\frac{n - 1}{2}$ or $\frac{n + 3}{2}$. Thus, $n_{odd} = \frac{n - 1}{2}$, if $\frac{n - 1}{2} \overset{4}{\equiv} \frac{n - 1}{4}$, which means
		$n \overset{16}{\equiv} 1$, and
		$n_{odd} = \frac{n + 3}{2}$, if $\frac{n + 3}{2} \overset{4}{\equiv} \frac{n - 1}{4}$, which means
		$n \overset{16}{\equiv} 9$.
	\end{proof}
	\begin{theorem}\label{theorem:bounds}
		Let $G$ be a graph of order $n = 4k + 1$. If the Seidel matrix $S$ of $G$ is singular, and $\phi(S) \in \{\pm 1\}^n$, then $3k \leq |E(G)| \leq 8k^2 - k$, and both given bounds are tight.
	\end{theorem}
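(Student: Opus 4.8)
The plan is to translate the edge count into the switching class provided by Theorem \ref{theorem:switch-reg}, and then reduce the two bounds to a single one by complementation. By Theorem \ref{theorem:switch-reg}, $G$ lies in the switching class of a $2k$-regular graph $H$ on $n=4k+1$ vertices, so $|E(H)|=k(4k+1)=4k^2+k$, and $G$ is obtained from $H$ by switching with respect to the set $L=\{v_i : \phi(S)_i=-1\}$; write $t=|L|$ and $\overline{L}=V\setminus L$. Since switching only complements the edges between $L$ and $\overline{L}$, I would first record $|E(G)|=|E(H)|+t(n-t)-2e_H(L,\overline{L})$, where $e_H(L,\overline{L})$ is the number of $L$--$\overline{L}$ edges of $H$, and then, using $e_H(L,\overline{L})=2kt-2e_H(L)$ (each vertex of $L$ has degree $2k$), simplify to the clean identity
\[
|E(G)| = 4k^2+k+t-t^2+4e_H(L),
\]
where $e_H(L)$ is the number of edges of $H$ inside $L$.

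Before attacking either inequality directly, I would exploit the symmetry $S(\overline{G})=-S(G)$: the complement $\overline{G}$ again has singular Seidel matrix with the same nullspace, so $\phi(\overline{G})\in\{\pm1\}^n$ and $\overline{G}$ satisfies the same hypotheses. Since $|E(\overline{G})|=\binom{n}{2}-|E(G)|=8k^2+2k-|E(G)|$, the upper bound $|E(G)|\le 8k^2-k$ is exactly equivalent to the lower bound $|E(\overline{G})|\ge 3k$. Hence it suffices to prove the single inequality $|E(G)|\ge 3k$ for every admissible $G$; the upper bound and its tightness then follow for free by passing to complements.

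For the lower bound, the identity above shows that $|E(G)|\ge 3k$ is equivalent to $4e_H(L)\ge (t-2k)(t+2k-1)$, and I would split on the size of $t$. For $t\le 2k$ the right-hand side is non-positive while $e_H(L)\ge 0$, so the inequality is immediate. For $t\ge 2k+1$ the right-hand side is positive, and the needed lower bound on $e_H(L)$ must come from counting the cross-edges through the \emph{smaller} side: from $e_H(L,\overline{L})\le 2k(n-t)$ one gets $e_H(L)\ge k(2t-4k-1)$. Substituting this reduces the claim to the quadratic inequality $t^2-(8k+1)t+12k^2+6k\le 0$, which holds precisely on $[2k+1,6k]$ since the quadratic factors with roots $2k+1$ and $6k$; as $t\le 4k+1\le 6k$, the whole remaining range is covered. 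The main subtlety here is picking the right counting estimate: the naive bound $d_L(v)\ge t-2k-1$ summed over $L$ is too weak near $t=2k+1$, and one must instead bound the cross-edges through $\overline{L}$.

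Finally, for tightness I would produce explicit extremal graphs. The identity shows $|E(G)|=3k$ can be attained by taking $t=2k$ with $L$ independent in $H$, so I would construct a $2k$-regular graph $H$ on $4k+1$ vertices having an independent set of size $2k$: let $\overline{L}$ consist of one vertex $u_0$ joined to all of $L$ together with a perfect matching on the remaining $2k$ vertices of $\overline{L}$, and take the bipartite connection between $L$ and those $2k$ matched vertices to be $K_{2k,2k}$ minus a perfect matching. A direct degree check confirms $H$ is $2k$-regular with $L$ independent, so switching with respect to $L$ gives a graph with exactly $3k$ edges. Taking the complement of this graph yields the extremal example with $8k^2-k$ edges, establishing tightness of the upper bound.
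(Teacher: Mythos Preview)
Your argument is correct and takes a genuinely different route from the paper's. Both proofs start from Theorem~\ref{theorem:switch-reg} and reduce the upper bound to the lower one via $S(\overline{G})=-S(G)$, but the paper establishes $|E(G)|\ge 3k$ by a case split on $\delta(G)$: if $\delta(G)\ge 2$ the bound is trivial, while for $\delta(G)\in\{0,1\}$ it fixes a minimum-degree vertex $v$, analyses the partition of $V(H)$ induced by $N_H(v)$, and in the leaf case invokes Lemma~\ref{lemma:leaf}. Your identity $|E(G)|=4k^2+k+t-t^2+4e_H(L)$ bypasses all of this, reducing the inequality to a clean estimate on $e_H(L)$ in terms of the cross-degree, and in particular avoids Lemma~\ref{lemma:leaf} entirely. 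Your approach also makes the equality cases transparent. One simplification you missed: since switching with respect to $L$ and to $V\setminus L$ yield the same graph, you may assume $t\le 2k$ from the start, which renders the entire $t\ge 2k+1$ case unnecessary. (As a tiny edge case, your claim ``$t\le 4k+1\le 6k$'' fails for $k=0$, but that case is trivial.) For tightness, the paper instead takes $G_k$ to be $k$ disjoint copies of $P_4$ together with an isolated vertex and checks directly that switching the leaves gives a $2k$-regular graph; your construction via an explicit $2k$-regular $H$ with an independent $2k$-set is an equally valid alternative.
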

	\begin{proof} Let $\delta(G)$ denote the minimum degree of $G$. If $\delta(G) \geq 2$, then $|E(G)|\geq 4k+1 > 3k$. Suppose that $\delta(G) < 2$, and let $v$ be a vertex of minimum degree. By Theorem \ref{theorem:switch-reg}, there exists a $2k$-regular graph $H$ such that $G$ is obtained from $H$ by some switching. Let $V$ be the vertex set of $G$ and $H$. There are two possibilities for $d_G(v)$:
		\begin{itemize}
			\item $d_{G}(v) = 0$. Let $\alpha$ be the number of edges in $H$ with endpoints in $N_{H}(v)$, $\beta$ be the number of edges in $H$ with endpoints in $V \backslash N_{H}[v]$, and $\gamma$ be the number of edges in $H$ with one endpoint in $N_{H}(v)$ and the other one in $V \backslash N_{H}[v]$. The $2k$ edges adjacent to $v$ are the only edges of $H$ that are not counted in neither $\alpha$, $\beta$ nor $\gamma$. Thus, we have
			\begin{equation}\label{eq:aaaa}
			\alpha + \beta + \gamma = \dfrac{2k(4k + 1)}{2} - 2k = 4k^2 - k.
			\end{equation}
			Every vertex $u$ in $N_H(v)$ has at most $2k - 1$ neighbors in $V \backslash N_{H}[v]$, so $\gamma \leq (2k)(2k - 1)=4k^2-2k$. Thus, we have
			\begin{equation}
			\label{eq:bbbb}
			4k^2 - 2\gamma \geq 4k^2 - 2(4k^2 - 2k) = 4k - 4k^2.
			\end{equation}
			There exists a set $A \subseteq V$ such that switching with respect to $A$ transforms $H$ into $G$. This switching makes $v$ an isolated vertex. Therefore,
			$A$ is equal to either $N_{H}(v)$ or $V \backslash N_{H}(v)$. By applying this switching, the edges with endpoints in $N_H(v)$ and the edges with endpoints in $V \backslash N_H[v]$ remain as they were, and the edges and non-edges between the two sets are switched. Since $|N_H(v)|=|V\setminus N_H[v]|=2k$, by equations (\ref{eq:aaaa}) and (\ref{eq:bbbb}), we get that
			\begin{equation*}
			|E(G)| = \alpha + \beta + 4k^2 - \gamma = (\alpha + \beta + \gamma)
			+ (4k^2 - 2\gamma) \geq 4k^2 - k + 4k - 4k^2 = 3k.
			\end{equation*}	
			\item $d_{G}(v) = 1$. By Lemma \ref{lemma:leaf}, there are at most $2k + 2$ odd and at least $2k - 1$ even vertices in $G$.
			Since there is no vertex of degree 0, the degree of any even vertex in $G$ is at least $2$. As a result,
			\begin{equation*}
			|E(G)| = \frac{1}{2} \sum_{v \in V}d_{G}(v) \geq \frac{1}{2} (1(2k+2) + 2(2k-1)) = 3k.
			\end{equation*}
		\end{itemize}
		Since $S(\overline{G}) = -S(G)$, we have $\phi(\overline{G}) = \phi(G)$, so by the first inequality for graph $\overline{G}$, $|E(\overline{G})| \geq 3k$, and therefore, $|E(G)| \leq 8k^2 - k$.
		
		In the sequel, we show that the given lower bound and upper bound are tight. Let $k$ be a natural number and $G_k$ be the disjoint union of $k$ copies of $P_4$ and one isolated vertex. Note that the order and size of $G_k$ are $4k + 1$ and $3k$, respectively. By switching with respect to the $2k$ leaves of $G_k$, we obtain a $2k$-regular graph of order $4k + 1$. By Theorem \ref{theorem:switch-reg}, the Seidel matrix of $G_k$ is singular, and $\phi(G_k) \in \{\pm 1\}^n$. The tightness of the upper bound can be shown by considering $\overline{G_k}$.
	\end{proof}
	\begin{remark}
		\label{remark:circ}
		\normalfont For any natural number $k \geq 3$, we construct a graph $H_k$ of order $4k + 1$ and size $3k$, shown in Figure \ref{fig:graph3}, containing one $C_k$, $2k$ leaves, and $k + 1$ isolated vertices. It can be checked that switching the leaves transforms this graph into a $2k$-regular graph, and so by Theorem \ref{theorem:switch-reg}, $\phi(H_k)  \in \{\pm 1\}^n$.
		\begin{figure}[h]
			\centering
			\begin{tikzpicture}[x=0.75pt,y=0.75pt,yscale=-1,xscale=1]
			\draw (290,0) -- (280, 25);	
			\draw (255,0) -- (285,0);
			\draw (355,0) -- (385,0);
			\draw (390, -5) arc (300:240:140);
			\draw (250,0) -- (240, 25);
			\draw (350,0) -- (360, 25);
			\draw (350,0) -- (340, 25);
			\draw (390,0) -- (400, 25);
			\draw (390,0) -- (380, 25);
			\draw (250,0) -- (260, 25);
			\draw (290,0) -- (300, 25);	
			
			\fill[black] (250,0) circle (5.5);
			\fill[black] (290,0) circle (5.5);
			\fill[black] (312,0) circle (1);
			\fill[black] (320,0) circle (1);
			\fill[black] (328,0) circle (1);
			\fill[black] (350,0) circle (5.5);
			\fill[black] (390,0) circle (5.5);
			\fill[black] (240,30) circle (5.5) ;
			\fill[black] (260,30) circle (5.5) ;
			\fill[black] (280,30) circle (5.5) ;
			\fill[black] (300,30) circle (5.5) ;
			\fill[black] (340,30) circle (5.5) ;
			\fill[black] (360,30) circle (5.5) ;
			\fill[black] (380,30) circle (5.5) ;
			\fill[black] (400,30) circle (5.5) ;
			\fill[black] (230,60) circle (5.5);
			\fill[black] (270,60) circle (5.5);
			\fill[black] (312,60) circle (1);
			\fill[black] (320,60) circle (1);
			\fill[black] (328,60) circle (1);
			\fill[black] (370,60) circle (5.5);
			\fill[black] (410,60) circle (5.5);
			
			\end{tikzpicture}
			\caption{Graph $H_k$}
			\label{fig:graph3}
		\end{figure}
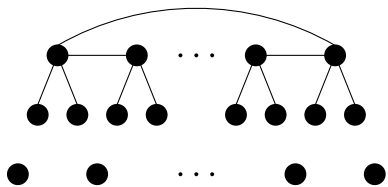
		
	\end{remark}
	The structure given in Remark \ref{remark:circ} indicates that there are some graphs with $\phi \in \{\pm 1\}^n$ and minimum possible size, which are not a forest. Also, it shows that for any $k\geq3$, there is a graph containing $C_k$, which has an affirmative answer to the Problem \ref{problem:hae}.
	
	\textbf{Acknowledgment.} The research of the first author was supported by grant number (G981202) from the Sharif University of Technology. The research of the second author was supported by the grants NSF DMS-1600768, CIF-1815922, and a JSPS Invitational Fellowship for Research in Japan (Short-term S19016). We thank the referee for her/his fruitful comments.

	\bibliographystyle{siam}
	\bibliography{seidel-nullity-bibliography}
	
\end{document}